\newtheorem{theorem}{Theorem}[section]
\newtheorem{definition}[theorem]{Definition}
\newtheorem{lemma}[theorem]{Lemma}
\newtheorem{remark}[theorem]{Remark}
\numberwithin{equation}{section}
\begin{document}

\title{On a Hilfer  fractional differential equation with nonlocal Erd\'{e}lyi-Kober fractional integral boundary conditions}
\author{Mohamed I. Abbas\thanks{miabbas77@gmail.com}\\ Department of Mathematics and Computer Science, Faculty of Science, Alexandria University, Alexandria 21511, Egypt}
\date{}

\maketitle
\makeatletter
\renewcommand\@makefnmark%
{\mbox{\textsuperscript{\normalfont\@thefnmark)}}}
\makeatother

\begin{abstract}
We consider a Hilfer fractional differential equation with nonlocal Erd\'{e}lyi-Kober fractional integral boundary conditions. The existence, uniqueness and Ulam-Hyers stability results are investigated by means of the Krasnoselskii's fixed point theorem and  Banach's fixed point theorem. An example is given to illustrate the main results.
\end{abstract}
\textbf{Mathematics Subject Classification:} 26A33, 34A08, 34K10. \\
\textbf{Keywords:} Erd\'{e}lyi-Kober fractional integral, Hilfer fractional derivative, Krasnoselskii's fixed point theorem, Ulam-Hyers stability.

\section{Introduction}
It has become widely observed in recent years a large number of research papers interested in the theory of fractional differential equations, whether those involving classical Riemann-Liouville and Caputo type fractional derivatives or that include Hadamard and Hilfer type fractional derivatives, see for example \cite{Abbas4,Abbas3,cc6,Bashir1,Bashir2,X,Y,Rafal,T,Z,W,JinRong,Vivek} and references cited therein.\\
On the other hand, The stability of functional equations was originally raised by Ulam \cite{Ulam}, next by Hyers \cite{Hyers}. Thereafter, this type of stability is called the Ulam-Hyers stability. The concept of stability for a functional equation arises when we replace the functional equation by an inequality which acts as a perturbation of the equation. Considerable efforts have been made to study the Ulam-Hyers stability of all kinds of fractional differential equations, see for example \cite{Abbas1,Abbas2,Sabbas} and references therein.\\

In the past few years, the Erd\'{e}lyi-Kober fractional derivative, as a generalization
of the Riemann-Liouville fractional derivative, is often used, too \cite{Ntouyas16,wang14}. 
An Erd\'{e}lyi-Kober operator is a fractional
integration operation introduced by Arthur Erd\'{e}lyi and Hermann Kober in 1940 \cite{Kober}. These operators have been used by many authors, in particular, to obtain solutions of the single, dual and triple integral equations possessing special functions of mathematical physics as their kernels. In \cite{Ahmad17}, B. Ahmad et al. studied the existence and uniqueness of solution of a class of boundary value problems of Caputo fractional differential equations with Riemann-Liouville and Erd\'{e}lyi-Kober fractional integral boundary conditions of the form
$$
\begin{cases}
^{C}\mathcal{D}^{q}x(t)=f(t,x(t)),~~t\in [0,T],\\
x(0)=\alpha\mathcal{I}^{p}x(\zeta),~~x(T)=\beta\mathcal{I}_{\eta}^{\gamma,\delta}x(\xi),~~0<\zeta,\xi<T.
\end{cases}
$$
In \cite{Ntouyas17}, B. Ahmad and S. K. Ntouyas considered the following Riemann-Liouville fractional differential inclusion with Erd\'{e}lyi-Kober fractional integral boundary conditions
$$
\begin{cases}
\mathcal{D}^{q}x(t)\in F(t,x(t)),~~0<t<T,~1<q\leq 2,\\
x(0)=0,~~\alpha x(T)=\sum_{i=1}^{m}\beta_i\mathcal{I}_{\eta_i}^{\gamma_i,\delta_i}x(\xi),~~0<\xi<T,
\end{cases}
$$
they applied endpoint theory, Krasnoselskii's multi-valued fixed point theorem and Wegrzyk's fixed point
theorem for generalized contractions.\\

By using Mawhin continuation theorem, Q. Sun et al. \cite{Sun18} investigated the existence of
solutions of the following boundary value problem at resonance
$$
\begin{cases}
^{C}\mathcal{D}^{q}x(t)=f(t,x(t),x^{'}(t)),~~t\in [0,T],\\
x(0)=\alpha~\mathcal{I}_{\eta}^{\gamma,\delta}x(\zeta),~~x(T)=\beta~^{\rho}\mathcal{I}^{p}x(\xi),~~0<\zeta,\xi<T,
\end{cases}
$$
where $^{\rho}\mathcal{I}^{p}$ denotes to the generalized Riemann-Liouville (Katugampola) type integral of order $p>0$.\\

In the last of this brief survey, N. Thongsalee et al. \cite{Ntouyas15} studied the sufficient conditions for existence and uniqueness of solutions for system of Riemann-Liouville fractional differential equations subject to the nonlocal Erd\'{e}lyi-Kober fractional integral conditions of the form
$$
\begin{cases}
\mathcal{D}^{q_1}x(t)=f(t,x(t),y(t)),~~t\in [0,T],~1<q_1\leq2\\
\mathcal{D}^{q_2}y(t)=g(t,x(t),y(t)),~~t\in [0,T],~1<q_2\leq2\\
x(0)=0,~~y(T)=\sigma_1\mathcal{I}_{\eta_1}^{\gamma_1,\delta_1}x(\xi_1),~~0<\xi_1<T,\\
y(0)=0,~~x(T)=\sigma_2\mathcal{I}_{\eta_2}^{\gamma_2,\delta_2}y(\xi_2),~~0<\xi_1<T.
\end{cases}
$$
Based on the above mentioned papers, we consider the Hilfer fractional differential equations with Erd\'{e}lyi-Kober fractional integral boundary conditions of the form    
\begin{equation}\label{main}
\begin{cases}
^{H}\mathcal{D}^{\alpha,\beta}x(t)=f(t,x(t)),~~t\in [0,T],\\

x(0)=0,~~x(T)=\displaystyle \sum_{i=1}^{m}\sigma_{i}\mathcal{I}_{\eta_{i}}^{\mu_{i},\delta_{i}}x(\xi_{i}),~~
\end{cases}
\end{equation}
where $^{H}\mathcal{D}^{\alpha,\beta}$ is the Hilfer fractional derivative of order $\alpha\in(0,1)$ and type $\beta\in[0,1]$ introduced by Hilfer (see, \cite{Hilfer1,Hilfer2,Hilfer3}) , $\mathcal{I}_{\eta_{i}}^{\mu_{i},\delta_{i}}$ is the Erd\'{e}lyi-Kober fractional integral of order $\delta_{i}>0$ with $\eta_{i}>0$ and $\mu_{i}\in\mathbb{R},~i=1,2,\cdots,m$ and $\sigma_{i}\in\mathbb{R}$, $\xi_{i}\in(0,T)$ are given constants.\\
To the best of the author's knowledge this is the first paper dealing with Hilfer differential equation subject to Erd\'{e}lyi-Kober type integral boundary conditions.\\
The paper is organized as follows: Section 2 contains some preliminary concepts related to fractional calculus and Section 3 comprises the existence and uniqueness results. In Section 4, we analyze the Ulam-Hyres stability results. Finally, Section 5 contains an illustrative example of our main results.

\section{Preliminaries}
In this section we present some definitions and lemmas which will be used in our results later.\\
At first, we review some fundamental definitions of the Riemann-Liouville fractional integral and derivative which will be made up to the Hilfer fractional derivative (see \cite{Tatar,Kilbas}).
\begin{definition}
The Riemann-Liouville fractional integral of order $\alpha>0$ of a continuous function $y:(0,\infty)\to\mathbb{R}$ is defined by
\begin{equation}
\mathcal{I}^{\alpha}y(t)=\frac{1}{\Gamma(\alpha)}\int_{0}^{t}(t-s)^{\alpha-1}y(s)\ ds,~~n-1<\alpha<n,
\end{equation} 
where $n=[\alpha]+1,~[\alpha]$ denotes the integer part of a real number $\alpha$ and $\Gamma(\cdot)$ is the Gamma function defined by 
$\Gamma(\alpha)=\int_{0}^{\infty}e^{-s}s^{\alpha-1}ds$, provided the integral exists.
\end{definition}

\begin{definition}
The Riemann-Liouville fractional derivative of order $\alpha>0$ of a continuous function $y:(0,\infty)\to\mathbb{R}$ is defined by
\begin{eqnarray*}
^{RL}\mathcal{D}^{\alpha}y(t)&=&\mathcal{D}^{n}\mathcal{I}^{n-\alpha}y(t)\\
&=&\frac{1}{\Gamma(n-\alpha)}\left(\frac{d}{dt}\right)^{n}\int_{0}^{t}(t-s)^{n-\alpha-1}y(s)\ ds,~~n-1<\alpha<n,
\end{eqnarray*}
\end{definition}

\begin{definition}(Hilfer fractional derivative)\label{Hderiv}
The Hilfer fractional derivative operator of order $\alpha$ and type $\beta$ is defined by
\begin{equation}\label{Hilfer}
^{H}\mathcal{D}^{\alpha,\beta}y(t)=\mathcal{I}^{\beta(n-\alpha)}\mathcal{D}^{n}\mathcal{I}^{(1-\beta)(n-\alpha)}y(t),
\end{equation}
where $n-1<\alpha<n$, $0\leq\beta\leq 1$ and $\mathcal{D}=\frac{d}{dt}$.
\end{definition}

This generalization (\ref{Hilfer}) yields the classical Riemann-Liouville fractional derivative operator when $\beta=0$. Moreover, for $\beta=1$, it gives the Caputo fractional derivative operator.\\
Some properties and applications of the generalized Riemann-Liouville fractional derivative are given in \cite{Hilfer1}.\\

\begin{definition}
The Erd\'{e}lyi-Kober fractional integral of order $\delta>0$ with $\eta>0$ and $\mu\in\mathbb{R}$ of a continuous function $y:(0,\infty)\to\mathbb{R}$ is defined by
\begin{equation}
\mathcal{I}_{\eta}^{\mu,\delta}y(t)=\frac{\eta t^{-\eta(\delta+\mu)}}{\Gamma(\delta)}\int_{0}^{t}\frac{s^{\eta\mu+\eta-1}y(s)}{(t^{\eta}-s^{\eta})^{1-\delta}}ds,
\end{equation} 
 provided the right side is pointwise defined on $\mathbb{R}^+$.
\end{definition}

\begin{remark}
For $\eta=1$, the above operator is reduced to the Kober operator
$$K^{\mu,\delta}y(t)=\frac{t^{-(\delta+\mu)}}{\Gamma(\delta)}\int_{0}^{t}\frac{s^{\mu}y(s)}{(t-s)^{1-\delta}}ds,~~\mu,\delta>0,$$
that was introduced  for the first time by Kober in \cite{Kober}. For $\mu=0$, the Kober operator is reduced to the Riemann-Liouville
fractional integral with a power weight:
$$K^{0,\delta}y(t)=\frac{t^{-\delta}}{\Gamma(\delta)}\int_{0}^{t}\frac{y(s)}{(t-s)^{1-\delta}}ds,~~\delta>0.$$
\end{remark}

\begin{lemma}\label{Erd}
Let $\delta,\eta>0$ and $\mu,q\in\mathbb{R}$. Then we have
$$\mathcal{I}_{\eta}^{\mu,\delta}t^q=\frac{t^{q}\Gamma(\mu+(q/\eta)+1)}{\Gamma(\mu+(q/\eta)+\delta+1)}.$$
\end{lemma}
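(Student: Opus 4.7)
The plan is to reduce the integral defining $\mathcal{I}_{\eta}^{\mu,\delta}t^{q}$ to a Beta-function integral via a power substitution. Starting from the definition, I would rewrite
\[
\mathcal{I}_{\eta}^{\mu,\delta}t^{q}
=\frac{\eta\,t^{-\eta(\delta+\mu)}}{\Gamma(\delta)}\int_{0}^{t} s^{\eta\mu+\eta-1+q}\bigl(t^{\eta}-s^{\eta}\bigr)^{\delta-1}\,ds,
\]
and then substitute $u=(s/t)^{\eta}$, so that $s=t\,u^{1/\eta}$, $ds=(t/\eta)\,u^{1/\eta-1}\,du$, with $u$ running over $[0,1]$.

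Under this change of variable each ingredient becomes a pure power of $t$ times a power of $u$: $s^{\eta\mu+\eta-1+q}=t^{\eta\mu+\eta-1+q}u^{\mu+1-1/\eta+q/\eta}$; the kernel gives $(t^{\eta}-s^{\eta})^{\delta-1}=t^{\eta(\delta-1)}(1-u)^{\delta-1}$; and the Jacobian contributes $(t/\eta)u^{1/\eta-1}$. Tallying the $t$-exponents yields $-\eta(\delta+\mu)+\eta\mu+\eta-1+q+\eta(\delta-1)+1=q$, so a clean $t^{q}$ factors out, and the $\eta$ from the prefactor cancels the $1/\eta$ from $ds$. The $u$-exponents combine to $\mu+q/\eta$, leaving
\[
\mathcal{I}_{\eta}^{\mu,\delta}t^{q}=\frac{t^{q}}{\Gamma(\delta)}\int_{0}^{1} u^{\mu+q/\eta}(1-u)^{\delta-1}\,du.
\]

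Finally, I would identify the remaining integral as the Beta function $B\!\left(\mu+q/\eta+1,\delta\right)=\Gamma(\mu+q/\eta+1)\Gamma(\delta)/\Gamma(\mu+q/\eta+\delta+1)$; the $\Gamma(\delta)$ cancels against the prefactor and produces exactly the claimed identity.

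The only real obstacle is bookkeeping: the $t$-exponents must cancel down to $q$ and the $u$-exponents must land in the Beta-integrable range, so a single algebraic slip will derail the conclusion. A secondary point worth acknowledging is the implicit convergence assumption $\mu+q/\eta+1>0$ (together with $\delta>0$, which is already assumed) needed for the Beta integral to exist; under this mild restriction the computation is entirely elementary.
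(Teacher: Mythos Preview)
Your argument is correct: the substitution $u=(s/t)^{\eta}$ reduces the integral to $B(\mu+q/\eta+1,\delta)$, the powers of $t$ collapse to $t^{q}$ as you computed, and the Beta--Gamma identity gives the stated formula. The paper itself does not supply a proof of this lemma; it is recorded in the preliminaries as a known identity, so there is no alternative argument to compare against. Your remark that the Beta integral requires $\mu+q/\eta+1>0$ is a legitimate caveat that the paper's blanket hypothesis $\mu,q\in\mathbb{R}$ glosses over.
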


\begin{lemma}\label{integral}
Let $\eta,\lambda$ and $\nu$ be positive constants. Then
$$\int_{0}^{t}(t^{\eta}-s^{\eta})^{\lambda-1}s^{\nu-1}=\frac{t^{\eta(\lambda-1)+\nu}}{\eta}\mathbf{B}(\frac{\nu}{\eta},\lambda),$$
where $\mathbf{B}(w,v)=\int_{0}^{1}(1-s)^{w-1}s^{v-1}d,~~(\mathcal{R}e(w)>0,\mathcal{R}e(v)>0)$ is the well-known beta function.
\end{lemma}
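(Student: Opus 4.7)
The plan is to reduce the given integral to the standard form of the beta function by a single monotone change of variable. Specifically, I would set $u = (s/t)^{\eta}$, which is a smooth increasing bijection from $[0,t]$ onto $[0,1]$ whenever $\eta>0$. Equivalently, $s = tu^{1/\eta}$ and $ds = \frac{t}{\eta}\,u^{1/\eta-1}\,du$, and the endpoints transform cleanly: $s=0\leftrightarrow u=0$ and $s=t\leftrightarrow u=1$.

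Next, I would rewrite the two factors of the integrand in terms of $u$. The crucial observation is that $t^{\eta}-s^{\eta}=t^{\eta}(1-u)$, so $(t^{\eta}-s^{\eta})^{\lambda-1}=t^{\eta(\lambda-1)}(1-u)^{\lambda-1}$, while $s^{\nu-1}=t^{\nu-1}u^{(\nu-1)/\eta}$. Plugging these in and collecting all powers of $t$ outside the integral should give
\[
\int_{0}^{t}(t^{\eta}-s^{\eta})^{\lambda-1}s^{\nu-1}\,ds
=\frac{t^{\eta(\lambda-1)+\nu}}{\eta}\int_{0}^{1}(1-u)^{\lambda-1}u^{\nu/\eta-1}\,du,
\]
after the exponent bookkeeping $\eta(\lambda-1)+(\nu-1)+1=\eta(\lambda-1)+\nu$ and $(\nu-1)/\eta+1/\eta-1=\nu/\eta-1$.

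Finally, I would recognize the remaining $u$-integral as the beta function. With the paper's convention $\mathbf{B}(w,v)=\int_{0}^{1}(1-s)^{w-1}s^{v-1}\,ds$, the integral equals $\mathbf{B}(\lambda,\nu/\eta)$, which by symmetry $\mathbf{B}(w,v)=\mathbf{B}(v,w)$ is the same as $\mathbf{B}(\nu/\eta,\lambda)$, yielding the claimed identity. There is no real obstacle here: the computation is routine, and the only thing worth being careful about is tracking the exponents of $t$ and of $u$ after the substitution, as well as verifying that the hypotheses $\lambda,\nu/\eta>0$ guarantee convergence of the beta integral at both endpoints.
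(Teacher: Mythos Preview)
Your proof is correct and entirely standard: the substitution $u=(s/t)^{\eta}$ reduces the integral to the beta function, and your exponent bookkeeping and use of the symmetry $\mathbf{B}(w,v)=\mathbf{B}(v,w)$ are accurate. Note that the paper itself does not supply a proof of this lemma; it is simply stated as a known identity, so there is nothing to compare against beyond confirming that your argument is the natural one.
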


\begin{lemma}\label{kilbas}
let $1<\alpha\leq 2$. Then
$$\mathcal{I}^{\alpha}(^{RL}\mathcal{D}^{\alpha}f)(t)=f(t)-\frac{(\mathcal{I}^{1-\alpha}f)(a)}{\Gamma(\alpha)}(t-a)^{\alpha-1}-\frac{(\mathcal{I}^{2-\alpha}f)(a)}{\Gamma(\alpha-1)}(t-a)^{\alpha-2}.$$
\end{lemma}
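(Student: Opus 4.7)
The plan is to derive this formula as a consequence of the Riemann--Liouville representation of the derivative (Definition 2.2) combined with the semi-group law for Riemann--Liouville integrals and integration by parts. Since $1<\alpha\le 2$, we have $n=2$ in Definition 2.2, so writing $g(t):=\mathcal{I}^{2-\alpha}f(t)$ we obtain $^{RL}\mathcal{D}^{\alpha}f(t)=g''(t)$. The whole task then reduces to evaluating $\mathcal{I}^{\alpha}g''(t)$ and re-expressing the answer in terms of $f$ and boundary values of $g$ at $t=a$.

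First I would handle the outermost derivative: integrating by parts once in
$$\mathcal{I}^{\alpha}g''(t)=\frac{1}{\Gamma(\alpha)}\int_{a}^{t}(t-s)^{\alpha-1}g''(s)\,ds$$
yields, since $\alpha-1>0$ so the kernel vanishes at $s=t$,
$$\mathcal{I}^{\alpha}g''(t)=\mathcal{I}^{\alpha-1}g'(t)-\frac{g'(a)\,(t-a)^{\alpha-1}}{\Gamma(\alpha)}.$$
Then I would integrate by parts a second time in $\mathcal{I}^{\alpha-1}g'(t)$. This is the delicate step because for $1<\alpha<2$ the kernel $(t-s)^{\alpha-2}$ is singular at $s=t$; the clean way to finesse this is to use the semi-group identity $\mathcal{I}^{\alpha-1}\mathcal{D}=\mathcal{D}\mathcal{I}^{\alpha-1}-\text{boundary term}$ in the regularized form valid on suitably absolutely continuous functions, obtaining
$$\mathcal{I}^{\alpha-1}g'(t)=\mathcal{I}^{\alpha-2}g(t)-\frac{g(a)\,(t-a)^{\alpha-2}}{\Gamma(\alpha-1)},$$
where $\mathcal{I}^{\alpha-2}$ is interpreted in the fractional sense (it is the derivative of order $2-\alpha$, i.e.\ the left-inverse of $\mathcal{I}^{2-\alpha}$).

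The final step is to invoke the semi-group property $\mathcal{I}^{\alpha-2}\mathcal{I}^{2-\alpha}f=f$ to collapse $\mathcal{I}^{\alpha-2}g=\mathcal{I}^{\alpha-2}\mathcal{I}^{2-\alpha}f$ down to $f(t)$, and to recognize the boundary data $g(a)=(\mathcal{I}^{2-\alpha}f)(a)$ and $g'(a)=\mathcal{D}\mathcal{I}^{2-\alpha}f(a)=(\mathcal{I}^{1-\alpha}f)(a)$ in the paper's notational convention. Collecting the three contributions yields exactly
$$\mathcal{I}^{\alpha}(^{RL}\mathcal{D}^{\alpha}f)(t)=f(t)-\frac{(\mathcal{I}^{1-\alpha}f)(a)}{\Gamma(\alpha)}(t-a)^{\alpha-1}-\frac{(\mathcal{I}^{2-\alpha}f)(a)}{\Gamma(\alpha-1)}(t-a)^{\alpha-2},$$
as claimed.

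The main obstacle is the second integration by parts, where the kernel $(t-s)^{\alpha-2}$ fails to be integrable near $s=t$ in the classical sense for $\alpha<2$; one has to justify the manipulation either by a limiting argument (replacing $t$ with $t-\varepsilon$ and letting $\varepsilon\downarrow 0$) or by appealing directly to the composition theorem for Riemann--Liouville operators on the appropriate function space, which is the content of the standard result in \cite{Kilbas}. Once that identity is in hand, the rest is bookkeeping of the boundary terms.
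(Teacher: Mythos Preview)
The paper does not actually prove Lemma~\ref{kilbas}; it is quoted without proof as a standard identity from the reference \cite{Kilbas} (note the label \texttt{kilbas}). So there is no ``paper's own proof'' to compare against: the author simply imports the formula as a known fact from the fractional-calculus literature and uses it as a black box in the proof of Lemma~\ref{ess}.

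Your sketch is the correct way to establish the identity from first principles. Writing $g=\mathcal{I}^{2-\alpha}f$ so that $^{RL}\mathcal{D}^{\alpha}f=g''$, then peeling off the two derivatives by two successive integrations by parts inside $\mathcal{I}^{\alpha}$, is exactly the argument one finds in \cite{Kilbas} (Lemma~2.22 and the surrounding material). You have correctly flagged the only real issue: the second integration by parts involves the kernel $(t-s)^{\alpha-2}$, which is singular at $s=t$ when $\alpha<2$, so the manipulation must be justified either by a limiting argument or by invoking the composition rule $\mathcal{I}^{\alpha-1}\mathcal{D}g=\mathcal{D}\mathcal{I}^{\alpha-1}g-\dfrac{g(a)}{\Gamma(\alpha-1)}(t-a)^{\alpha-2}$ on the appropriate class of absolutely continuous functions. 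Your identification of the boundary data $g(a)=(\mathcal{I}^{2-\alpha}f)(a)$ and $g'(a)=\mathcal{D}\mathcal{I}^{2-\alpha}f(a)=(\mathcal{I}^{1-\alpha}f)(a)$, with the convention that $\mathcal{I}^{1-\alpha}$ for $\alpha>1$ denotes the Riemann--Liouville derivative of order $\alpha-1$, is also correct and matches the notation used here. There are no gaps in your plan; it would constitute a complete proof once the second step is made precise on, say, $g\in AC^{2}[a,T]$.
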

Now, We adopt the following definitions of Ulam-Hyeres and generalized Ulam-Hyers stabilities from Rus \cite{Rus}.

\begin{definition}\label{def1}
Equation considered in problem (\ref{main}) is Ulam-Hyers stable if there exists a real number $C_{f}>0$ such that for each $\epsilon>0$ and for each solution $y\in C([0,T],\mathbb{R})$ of the inequality 
$$
\left|^{H}\mathcal{D}^{\alpha,\beta}y(t)-f\left(t,y(t)\right)\right|\leq\epsilon,~~t\in [0,T],
$$
 there exists a solution $x\in C([0,T],\mathbb{R})$ of Eq.(\ref{main}) with $$\left|y(t)-x(t)\right|\leq C_{f}\epsilon,~~~t\in [0,T].$$ 
\end{definition}

\begin{definition}\label{def2}
Equation considered in problem (\ref{main}) is generalized Ulam-Hyers stable if there exists $\vartheta_{f}\in C(\mathbb{R}^{+},\mathbb{R}^{+})$, $\vartheta_{f}(0)=0$ such that for each solution $y\in C([0,T],\mathbb{R})$ of the inequality 
$$
\left|^{H}\mathcal{D}^{\alpha,\beta}y(t)-f\left(t,y(t)\right)\right|\leq\epsilon,~~t\in [0,T],
$$
there exists a solution $x\in C([0,T],\mathbb{R})$ of Eq.(\ref{main}) with $$\left|y(t)-x(t)\right|\leq \vartheta_{f}(\epsilon),~~~t\in [0,T].$$ 
\end{definition}

\begin{remark}
It is clear that Definition \textnormal{\ref{def1}} $\Longrightarrow$ Definition \textnormal{\ref{def2}}.
\end{remark}
To end this section, we recall the Krasnoselskii's fixed point theorem, which plays a key role in the main results for the problem (\ref{main}).
\begin{theorem}(\textbf{Krasnoselskii's fixed point theorem~\cite{Smart}})\label{Krasno}
Let $K$ be a closed convex and non-empty subset of a Banach space $\mathbb{X}$. Let $\mathcal{A}$ and $\mathcal{B}$, be two operators such that 
\begin{itemize}
  \item [(i)] $\mathcal{A}x+\mathcal{B}y\in K$, for all $x,y\in K$;
  \item [(ii)] $\mathcal{A}$ is a contraction mapping;
  \item [(iii)] $\mathcal{B}$ is compact and continuous.
\end{itemize}
Then there exists a $z\in K$ such that $z=\mathcal{A}z+\mathcal{B}z$.
\end{theorem}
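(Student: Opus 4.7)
The plan is to reduce Krasnoselskii's theorem to a combination of the Banach contraction principle (to handle the contractive piece $\mathcal{A}$) and Schauder's fixed point theorem (to handle the compact piece $\mathcal{B}$). The pivotal observation coming from hypothesis (i) is that for every fixed $y\in K$ the mapping $x\mapsto \mathcal{A}x+\mathcal{B}y$ sends $K$ into itself, and by (ii) it is a contraction on $K$ with the same constant $k<1$ as $\mathcal{A}$.

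First, I would apply Banach's fixed point theorem on the closed (hence complete) subset $K$ to produce, for each $y\in K$, a unique element $x(y)\in K$ satisfying $x(y)=\mathcal{A}x(y)+\mathcal{B}y$. This defines an auxiliary map $\varphi:K\to K$ by $\varphi(y)=x(y)$, and one immediately observes that a fixed point of $\varphi$ is exactly a $z\in K$ with $z=\mathcal{A}z+\mathcal{B}z$. The problem therefore reduces to locating a fixed point of $\varphi$.

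Second, I would verify the hypotheses of Schauder's theorem for $\varphi$. Starting from
\[
\|\varphi(y_1)-\varphi(y_2)\|=\|\mathcal{A}\varphi(y_1)-\mathcal{A}\varphi(y_2)+\mathcal{B}y_1-\mathcal{B}y_2\|\le k\|\varphi(y_1)-\varphi(y_2)\|+\|\mathcal{B}y_1-\mathcal{B}y_2\|,
\]
one obtains $\|\varphi(y_1)-\varphi(y_2)\|\le(1-k)^{-1}\|\mathcal{B}y_1-\mathcal{B}y_2\|$. Continuity of $\mathcal{B}$ then yields continuity of $\varphi$, and the same inequality applied to any sequence $\{y_n\}\subset K$ transfers the Cauchy property from a convergent subsequence of $\{\mathcal{B}y_n\}$ (which exists by compactness of $\mathcal{B}$) to $\{\varphi(y_n)\}$, establishing relative compactness of $\varphi(K)$ inside the closed set $K$.

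Finally, I would invoke Schauder's fixed point theorem on the closed, convex, nonempty set $K$ to produce $z\in K$ with $\varphi(z)=z$, which by construction satisfies $z=\mathcal{A}z+\mathcal{B}z$. The one place deserving care is the verification that hypothesis (i) is strong enough to keep the intermediate contraction subproblem confined to $K$; without this containment, $x(y)$ would only be guaranteed in $\mathbb{X}$ and the whole reduction would fail. Beyond that, the argument is a clean interpolation between the two classical fixed point principles.
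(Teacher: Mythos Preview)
The paper does not actually prove this statement: Theorem~\ref{Krasno} is quoted from \cite{Smart} as a classical tool and is used later in Theorem~\ref{existence}, but no proof is supplied in the paper itself. Your proposal is therefore not being compared against an in-paper argument.

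That said, your outline is correct and is precisely the standard proof of Krasnoselskii's theorem found in the literature (e.g., Smart, Burton). The reduction to Banach's principle for the auxiliary map $x\mapsto \mathcal{A}x+\mathcal{B}y$ on the complete set $K$, the Lipschitz estimate $\|\varphi(y_1)-\varphi(y_2)\|\le (1-k)^{-1}\|\mathcal{B}y_1-\mathcal{B}y_2\|$, and the final appeal to Schauder are all sound. The only cosmetic point is that your compactness step could be phrased a bit more directly: since $\mathcal{B}(K)$ is relatively compact and $\varphi$ is $(1-k)^{-1}$-Lipschitz as a function of $\mathcal{B}y$, the image $\varphi(K)$ is the continuous (indeed Lipschitz) image of a relatively compact set composed with a Lipschitz resolvent, hence relatively compact; but the subsequence argument you give is equivalent.
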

\section{Existence and Uniqueness Results}
Let $C([0,T],\mathbb{R})$ be the Banach space of all real-valued continuous functions from $[0,T]$ into $\mathbb{R}$ equipped by the norm $\|x\|_{C}=\sup_{t\in [0,T]}|x(t)|,~\forall x\in C([0,T],\mathbb{R})$.

\begin{lemma}\label{ess}
let $1<\alpha< 2,0\leq\beta\leq1,\gamma=\alpha+2\beta-\alpha\beta,\delta_{i},\eta_{i}>0,\mu_{i},\sigma_{i}\in\mathbb{R},\xi_{i}\in(0,T),~i=1,2,\cdots,m$ and $h\in C([0,T],\mathbb{R})$. Then the linear Hilfer fractional differential equation subject to the Erd\'{e}lyi-Kober fractional integral boundary conditions    
\begin{equation}\label{linear}
\begin{cases}
^{H}\mathcal{D}^{\alpha,\beta}x(t)=h(t),~~t\in [0,T],\\
x(0)=0,~~x(T)=\sum_{i=1}^{m}\sigma_{i}\mathcal{I}_{\eta_{i}}^{\mu_{i},\delta_{i}}x(\xi_{i}),~~
\end{cases}
\end{equation}
is equivalent to the following fractional integral equation
\begin{equation}\label{equivalent}
x(t)=\mathcal{I}^{\alpha}h(t)+\frac{t^{\gamma-1}}{\Delta}\left(\sum_{i=1}^{m}\sigma_{i}\mathcal{I}_{\eta_{i}}^{\mu_{i},\delta_{i}}\mathcal{I}^{\alpha}h(\xi_{i})-\mathcal{I}^{\alpha}h(T)\right),
\end{equation}
where 
\begin{equation}
\Delta=T^{\gamma-1}-\sum_{i=1}^{m}\sigma_{i}\xi_{i}^{\gamma-1}\frac{\Gamma(\mu_{i}+(\gamma-1)/\eta_{i}+1)}{\Gamma(\mu_{i}+(\gamma-1)/\eta_{i}+\delta_{i}+1)}\not=0.
\end{equation}
\end{lemma}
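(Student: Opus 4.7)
The plan is to invert the Hilfer operator and then use the two boundary conditions to pin down the two integration constants, with the Erd\'elyi--Kober integral of the resulting power function handled by Lemma \ref{Erd}.

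First I would apply the operator $\mathcal{I}^{\alpha}$ to both sides of $^{H}\mathcal{D}^{\alpha,\beta}x(t)=h(t)$. Writing the Hilfer derivative as $\mathcal{I}^{\beta(2-\alpha)}\mathcal{D}^{2}\mathcal{I}^{(1-\beta)(2-\alpha)}x(t)$ (since $1<\alpha<2$, so $n=2$) and invoking Lemma \ref{kilbas} on the inner Riemann--Liouville derivative structure, one obtains the general solution
\begin{equation*}
x(t)=\mathcal{I}^{\alpha}h(t)+c_{1}t^{\gamma-1}+c_{2}t^{\gamma-2},
\end{equation*}
where $\gamma=\alpha+\beta(2-\alpha)=\alpha+2\beta-\alpha\beta$ and $c_{1},c_{2}\in\mathbb{R}$ are arbitrary constants arising from the two ``$(I^{k-\alpha}\cdot)(0)$'' terms.

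Next I would use the boundary data. Since $\gamma-2=(\alpha-2)(1-\beta)<0$, the term $t^{\gamma-2}$ is singular at $t=0$, so the condition $x(0)=0$ forces $c_{2}=0$, leaving
$x(t)=\mathcal{I}^{\alpha}h(t)+c_{1}t^{\gamma-1}$. For the nonlocal condition I would apply $\mathcal{I}_{\eta_{i}}^{\mu_{i},\delta_{i}}$ to both sides at $t=\xi_{i}$ and sum against $\sigma_{i}$. By Lemma \ref{Erd},
\begin{equation*}
\mathcal{I}_{\eta_{i}}^{\mu_{i},\delta_{i}}t^{\gamma-1}\bigr|_{t=\xi_{i}}=\xi_{i}^{\gamma-1}\,\frac{\Gamma(\mu_{i}+(\gamma-1)/\eta_{i}+1)}{\Gamma(\mu_{i}+(\gamma-1)/\eta_{i}+\delta_{i}+1)},
\end{equation*}
so equating with $x(T)=\mathcal{I}^{\alpha}h(T)+c_{1}T^{\gamma-1}$ yields a single linear equation in $c_{1}$ whose coefficient is precisely $\Delta$. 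Since $\Delta\neq 0$, I can solve and substitute $c_{1}=\Delta^{-1}\bigl(\sum_{i}\sigma_{i}\mathcal{I}_{\eta_{i}}^{\mu_{i},\delta_{i}}\mathcal{I}^{\alpha}h(\xi_{i})-\mathcal{I}^{\alpha}h(T)\bigr)$ back into $x$, which gives exactly (\ref{equivalent}).

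Finally I would sketch the converse: differentiating (\ref{equivalent}) $\alpha$-many times in the Hilfer sense annihilates the $t^{\gamma-1}$ term (since $^{H}\mathcal{D}^{\alpha,\beta}t^{\gamma-1}=0$ by direct computation from Definition \ref{Hderiv}), recovering $h(t)$; the boundary conditions are then verified by direct substitution, using Lemma \ref{Erd} once more. The main obstacle I expect is the inversion step: one must justify carefully that in the range $1<\alpha<2$ the Hilfer derivative admits exactly the two-parameter family above and that $x(0)=0$ really eliminates the $t^{\gamma-2}$ mode rather than imposing a constraint on the $(I^{1-\alpha}x)(0)$-type data; everything after that reduces to bookkeeping with Lemma \ref{Erd} and the definition of $\Delta$.
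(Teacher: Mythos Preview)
Your proposal is correct and follows essentially the same route as the paper: apply $\mathcal{I}^{\alpha}$, recognize the composite as $\mathcal{I}^{\gamma}\,{}^{RL}\mathcal{D}^{\gamma}x$, invoke Lemma~\ref{kilbas} to get the two-parameter family $x(t)=\mathcal{I}^{\alpha}h(t)+c_{1}t^{\gamma-1}+c_{2}t^{\gamma-2}$, kill $c_{2}$ with $x(0)=0$, and then use Lemma~\ref{Erd} together with the nonlocal condition to solve for $c_{1}$. Your justification for $c_{2}=0$ via the singularity of $t^{\gamma-2}$ is in fact more explicit than the paper's, though note the borderline case $\beta=1$ (where $\gamma-2=0$) is handled directly by $x(0)=c_{2}$ rather than by a singularity argument.
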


\begin{proof}
By Definition \ref{Hderiv} (with $n=2$), the equation $^{H}\mathcal{D}^{\alpha,\beta}x(t)=h(t)$ can be written as
\begin{equation}\label{nequal2}
\mathcal{I}^{\beta(2-\alpha)}\mathcal{D}^{2}\mathcal{I}^{(1-\beta)(2-\alpha)}x(t)=h(t).
\end{equation}
Applying the Riemann-Liouville fractional integral $\mathcal{I}^{\alpha}$ of order $\alpha$ to the both sides of the equation (\ref{nequal2}), we get
$$\mathcal{I}^{\alpha}\mathcal{I}^{\beta(2-\alpha)}\mathcal{D}^{2}\mathcal{I}^{(1-\beta)(2-\alpha)}x(t)=\mathcal{I}^{\alpha}h(t).$$
Indeed,
$$\mathcal{I}^{\alpha}\mathcal{I}^{\beta(2-\alpha)}\mathcal{D}^{2}\mathcal{I}^{(1-\beta)(2-\alpha)}x(t)=\mathcal{I}^{\gamma}\mathcal{D}^{2}\mathcal{I}^{2-\gamma}x(t)=\mathcal{I}^{\gamma}(^{RL}\mathcal{D}^{\gamma}x)(t),$$
thus
$$\mathcal{I}^{\gamma}(^{RL}\mathcal{D}^{\gamma}x)(t)=\mathcal{I}^{\alpha}h(t).$$
By using Lemma \ref{kilbas} (with $a=0$), we get
$$x(t)=\mathcal{I}^{\alpha}h(t)+\frac{(\mathcal{I}^{1-\gamma}x)(0)}{\Gamma(\gamma)}t^{\gamma-1}+\frac{(\mathcal{I}^{2-\gamma}x)(0)}{\Gamma(\gamma-1)}t^{\gamma-2}.$$
Setting $(\mathcal{I}^{1-\gamma}x)(0)=c_1,~(\mathcal{I}^{2-\gamma}x)(0)=c_2$ gives
$$x(t)=\mathcal{I}^{\alpha}h(t)+\frac{c_1}{\Gamma(\gamma)}t^{\gamma-1}+\frac{c_2}{\Gamma(\gamma-1)}t^{\gamma-2}.$$
From the first boundary condition $x(a) = 0$, we obtain $c_2 = 0$. Then we get
\begin{equation}\label{subst}
x(t)=\mathcal{I}^{\alpha}h(t)+\frac{c_1}{\Gamma(\gamma)}t^{\gamma-1}.
\end{equation}
In view of Lemma \ref{Erd} and the boundary condition $\displaystyle x(T)=\sum_{i=1}^{m}\sigma_{i}\mathcal{I}_{\eta_{i}}^{\mu_{i},\delta_{i}}x(\xi_{i})$, we get
\begin{eqnarray*}
\mathcal{I}^{\alpha}h(T)+\frac{c_1}{\Gamma(\gamma)}T^{\gamma-1}&=&\sum_{i=1}^{m}\sigma_{i}\mathcal{I}_{\eta_{i}}^{\mu_{i},\delta_{i}}\left(\mathcal{I}^{\alpha}h(\xi_{i})+\frac{c_1}{\Gamma(\gamma)}\xi_{i}^{\gamma-1}\right)\\
&=&\sum_{i=1}^{m}\sigma_{i}\mathcal{I}_{\eta_{i}}^{\mu_{i},\delta_{i}}\mathcal{I}^{\alpha}h(\xi_{i})+\frac{c_1}{\Gamma(\gamma)}\sum_{i=1}^{m}\sigma_{i}\mathcal{I}_{\eta_{i}}^{\mu_{i},\delta_{i}}\xi_{i}^{\gamma-1}\\
&=&\sum_{i=1}^{m}\sigma_{i}\mathcal{I}_{\eta_{i}}^{\mu_{i},\delta_{i}}\mathcal{I}^{\alpha}h(\xi_{i})+\frac{c_1}{\Gamma(\gamma)}\sum_{i=1}^{m}\sigma_{i}\xi_{i}^{\gamma-1}\frac{\Gamma(\mu_{i}+(\gamma-1)/\eta_{i}+1)}{\Gamma(\mu_{i}+(\gamma-1)/\eta_{i}+\delta_{i}+1)}.
\end{eqnarray*}
Therefore, we conclude that 
$$c_1=\Gamma(\gamma)\left(\frac{\sum_{i=1}^{m}\sigma_{i}\mathcal{I}_{\eta_{i}}^{\mu_{i},\delta_{i}}\mathcal{I}^{\alpha}h(\xi_{i})-\mathcal{I}^{\alpha}h(T)}{T^{\gamma-1}-\sum_{i=1}^{m}\sigma_{i}\xi_{i}^{\gamma-1}\frac{\Gamma(\mu_{i}+(\gamma-1)/\eta_{i}+1)}{\Gamma(\mu_{i}+(\gamma-1)/\eta_{i}+\delta_{i}+1)}}\right)=\frac{\Gamma(\gamma)}{\Delta}\left(\sum_{i=1}^{m}\sigma_{i}\mathcal{I}_{\eta_{i}}^{\mu_{i},\delta_{i}}\mathcal{I}^{\alpha}h(\xi_{i})-\mathcal{I}^{\alpha}h(T)\right).$$
By substitution the value of $c_1$ in equation (\ref{subst}), we obtain the solution (\ref{equivalent}). The converse follows by direct computation. This completes the proof.
\end{proof}

We consider the following assumptions:
\begin{itemize} 
  \item []$(H1)$ The function $f: [0,T]\times\mathbb{R}\to \mathbb{R}$ is continuous. 
  \item []$(H2)$ There exist  constants $L,M>0$  such that
  $$|f(t,x)-f(t,y)|\leq L|x-y|,~\text{for each}~t\in [0,T],~x,y\in C([0,T],\mathbb{R}),$$ and $$M=\sup_{t\in [0,T]}|f(0,t)|.$$
  \item []$(H3)$ There exists a function $\psi\in C([0,T],\mathbb{R}^+)$ such that
  $$|f(t,x)|\leq \psi(t),~\text{for all}~(t,x)\in [0,T]\times\mathbb{R},$$ and $$\|\psi\|=\sup_{t\in [0,T]}|\psi(t)|.$$
\end{itemize}

We transform the problem (\ref{main}) into a fixed point problem $\mathcal{F}x=x$, where the operator $\mathcal{F}:C([0,T],\mathbb{R})\to C([0,T],\mathbb{R})$ is defined by
$$(\mathcal{F}x)(t)=\mathcal{I}^{\alpha}f(s,x(s))(t)+\frac{t^{\gamma-1}}{\Delta}\left(\sum_{i=1}^{m}\sigma_{i}\mathcal{I}_{\eta_{i}}^{\mu_{i},\delta_{i}}\mathcal{I}^{\alpha}f(s,x(s))(\xi_{i})-\mathcal{I}^{\alpha}f(s,x(s))(T)\right),$$
where
$$\mathcal{I}_{\eta_{i}}^{\mu_{i},\delta_{i}}\mathcal{I}^{\alpha}f(s,x(s))(\xi_{i})=\frac{\eta_{i}\xi_{i}^{-\eta_{i}(\delta_{i}+\mu_{i})}}{\Gamma(\delta_{i})\Gamma(\alpha)}\int_{0}^{\xi_{i}}\int_{0}^{y}\frac{y^{\eta_{i}\mu_{i}+\eta_{i}-1}(y-s)^{\alpha-1}}{(\xi_{i}^{\eta_{i}}-y^{\eta_{i}})^{1-\delta_{i}}}f(s,x(s))dsdy,$$\\
where $\xi_{i}\in(0,T)$ for $i=1,2,\cdots,m$, and
$$\mathcal{I}^{\alpha}f(s,x(s))(y)=\frac{1}{\Gamma{\alpha}}\int_{0}^{y}(y-s)^{\alpha-1}f(s,x(s))ds,~~y\in\{t,T\}$$
for $t\in[0,T].$\\
The following uniqueness result is based on Banach's fixed point theorem.
\begin{theorem}\label{uniqueness}
Under the assumptions $(H1)$ and $(H2)$, the boundary value problem (\ref{main}) has a unique solution on $[0,T]$, provided that 
$L\Omega<1,$ where
\begin{equation}\label{condcontra}
\Omega=\frac{1}{\Gamma(\alpha+1)}\left(T^{\alpha}+\frac{T^{\gamma+\alpha-1}}{|\Delta|}+\frac{T^{\gamma-1}}{|\Delta|}\sum_{i=1}^{m}\frac{|\sigma_{i}|\xi_{i}^{\alpha}\Gamma(\alpha/\eta_{i}+\mu_{i}+1)}{\Gamma(\delta_{i}+\alpha/\eta_{i}+\mu_{i}+1)}\right)
\end{equation}
\end{theorem}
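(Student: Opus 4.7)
The plan is to apply Banach's contraction mapping principle directly on the Banach space $(C([0,T],\mathbb{R}),\|\cdot\|_{C})$ to the integral operator $\mathcal{F}$ supplied by Lemma \ref{ess}; its fixed points are, by that lemma, precisely the solutions of (\ref{main}). Working on the full space rather than on a ball makes the constant $M$ from $(H2)$ unnecessary, so only the Lipschitz constant $L$ will actually enter the estimate. Given $x,y\in C([0,T],\mathbb{R})$ and $t\in[0,T]$, I would split $(\mathcal{F}x)(t)-(\mathcal{F}y)(t)$ into the three contributions coming from the three summands in the definition of $\mathcal{F}$, and apply $(H2)$ to replace $|f(s,x(s))-f(s,y(s))|$ by $L\|x-y\|_{C}$ throughout.

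For the two standard Riemann--Liouville pieces I would use the elementary bound $|\mathcal{I}^{\alpha}g(s)|\le\|g\|_{C}\,s^{\alpha}/\Gamma(\alpha+1)$, applied at $s=t\le T$ and at $s=T$. For the prefactor $t^{\gamma-1}/\Delta$, I would use that $\gamma=\alpha+\beta(2-\alpha)\ge\alpha>1$, so $t^{\gamma-1}\le T^{\gamma-1}$ on $[0,T]$, together with $|\Delta|>0$ from the hypothesis of Lemma \ref{ess}. The Erd\'elyi--Kober piece is the only slightly non-automatic step: having bounded $|\mathcal{I}^{\alpha}(f(\cdot,x)-f(\cdot,y))(y)|\le(L\|x-y\|_{C}/\Gamma(\alpha+1))\,y^{\alpha}$, I would use the positivity of the kernel in $\mathcal{I}_{\eta_{i}}^{\mu_{i},\delta_{i}}$ to bring the bound inside and then invoke Lemma \ref{Erd} with $q=\alpha$, yielding
\begin{equation*}
\bigl|\mathcal{I}_{\eta_{i}}^{\mu_{i},\delta_{i}}\mathcal{I}^{\alpha}\bigl(f(\cdot,x)-f(\cdot,y)\bigr)(\xi_{i})\bigr|\le\frac{L\|x-y\|_{C}}{\Gamma(\alpha+1)}\cdot\frac{\xi_{i}^{\alpha}\,\Gamma(\mu_{i}+\alpha/\eta_{i}+1)}{\Gamma(\mu_{i}+\alpha/\eta_{i}+\delta_{i}+1)}.
\end{equation*}

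Summing this over $i=1,\dots,m$, combining with the other two pieces, and taking the supremum over $t\in[0,T]$ should reproduce exactly $\|\mathcal{F}x-\mathcal{F}y\|_{C}\le L\Omega\|x-y\|_{C}$ with $\Omega$ as in (\ref{condcontra}). The hypothesis $L\Omega<1$ then makes $\mathcal{F}$ a strict contraction, and Banach's theorem yields a unique fixed point, hence a unique solution of (\ref{main}) on $[0,T]$. I expect the main obstacle to be purely the bookkeeping in the Erd\'elyi--Kober estimate: one must reduce the nested operator to the clean evaluation $\mathcal{I}_{\eta_{i}}^{\mu_{i},\delta_{i}}y^{\alpha}|_{y=\xi_{i}}$ handled by Lemma \ref{Erd}, rather than attempt to swap the order of integration directly via Lemma \ref{integral}.
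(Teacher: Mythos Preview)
Your proposal is correct and the contraction estimate you outline is exactly the one the paper derives: split into the three pieces, bound $|f(s,x(s))-f(s,y(s))|$ by $L\|x-y\|_{C}$, evaluate the Riemann--Liouville integrals directly, and reduce the Erd\'elyi--Kober term to $\mathcal{I}_{\eta_{i}}^{\mu_{i},\delta_{i}}y^{\alpha}\big|_{y=\xi_{i}}$ via Lemma~\ref{Erd}, arriving at $\|\mathcal{F}x-\mathcal{F}y\|_{C}\le L\Omega\|x-y\|_{C}$.

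The one structural difference is that the paper first restricts to a ball $\mathcal{B}_{r}=\{x:\|x\|_{C}\le r\}$ with $r\ge M\Omega/(1-L\Omega)$ and proves $\mathcal{F}\mathcal{B}_{r}\subset\mathcal{B}_{r}$ (this is where the constant $M=\sup_{t}|f(t,0)|$ from $(H2)$ is used, via $|f(s,x(s))|\le L\|x\|_{C}+M$), and only afterwards establishes the contraction estimate. You work directly on the whole space $C([0,T],\mathbb{R})$ and skip the invariance step. Your route is the cleaner one: since the contraction bound holds globally and $C([0,T],\mathbb{R})$ is already complete, Banach's theorem applies without any self-mapping-of-a-ball argument, and $M$ is indeed superfluous for uniqueness. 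The paper's extra step buys nothing for this theorem (it would matter for a Schauder-type argument, but not here).
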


\begin{proof}
Define the set $\mathcal{B}_r=\{x\in C([0,T],\mathbb{R}):\|x\|_{C}\leq r\}$ with
$$r\geq\frac{M\Omega}{1-L\Omega}.$$
Clearly, the fixed points of the operator $\mathcal{F}$ are solutions of problem (\ref{main}).\\
We show that $\mathcal{F}\mathcal{B}_{r}\subset\mathcal{B}_{r}$. For any $x\in\mathcal{B}_{r}$, we have
\begin{eqnarray*}
|(\mathcal{F}x)(t)|&\leq&\sup_{t\in [0,T]}\left\{\mathcal{I}^{\alpha}|f(s,x(s))|(t)+\frac{t^{\gamma-1}}{|\Delta|}\mathcal{I}^{\alpha}|f(s,x(s))|(T)\right.\\
&+&\left.\frac{t^{\gamma-1}}{|\Delta|}\sum_{i=1}^{m}|\sigma_{i}|\mathcal{I}_{\eta_{i}}^{\mu_{i},\delta_{i}}\mathcal{I}^{\alpha}|f(s,x(s))|(\xi_{i})\right\}\\
&\leq&\mathcal{I}^{\alpha}\left(|f(s,x(s))-f(s,0)|+|f(s,0)|\right)(T)\\
&+&\frac{T^{\gamma-1}}{|\Delta|}\mathcal{I}^{\alpha}\left(|f(s,x(s))-f(s,0)|+|f(s,0)|\right)(T)\\
&+&\frac{T^{\gamma-1}}{|\Delta|}\sum_{i=1}^{m}|\sigma_{i}|\mathcal{I}_{\eta_{i}}^{\mu_{i},\delta_{i}}\mathcal{I}^{\alpha}\left(|f(s,x(s))-f(s,0)|+|f(s,0)|\right)(\xi_{i})\\
&\leq&(Lr+M)\left(\frac{1}{\Gamma(\alpha)}\int_{0}^{T}(T-s)^{\alpha-1}ds+\frac{T^{\gamma-1}}{|\Delta|\Gamma(\alpha)}\int_{0}^{T}(T-s)^{\alpha-1}ds\right.\\
&+&\left.\frac{T^{\gamma-1}}{|\Delta|\Gamma(\alpha)}\sum_{i=1}^{m}|\sigma_{i}|\frac{\eta_{i}\xi_{i}^{-\eta_{i}(\delta_{i}+\mu_{i})}}{\Gamma(\delta_{i})}\int_{0}^{\xi_{i}}\int_{0}^{y}\frac{y^{\eta_{i}\mu_{i}+\eta_{i}-1}(y-s)^{\alpha-1}}{(\xi_{i}^{\eta_{i}}-y^{\eta_{i}})^{1-\delta_{i}}}dsdy\right)\\
&=&\frac{Lr+M}{\Gamma(\alpha+1)}\left(T^{\alpha}+\frac{T^{\gamma+\alpha-1}}{|\Delta|}+\frac{T^{\gamma-1}}{|\Delta|}\sum_{i=1}^{m}|\sigma_{i}|\frac{\eta_{i}\xi_{i}^{-\eta_{i}(\delta_{i}+\mu_{i})}}{\Gamma(\delta_{i})}\int_{0}^{\xi_{i}}\frac{y^{\alpha+\eta_{i}\mu_{i}+\eta_{i}-1}}{(\xi_{i}^{\eta_{i}}-y^{\eta_{i}})^{1-\delta_{i}}}dy\right)\\
&=&\frac{Lr+M}{\Gamma(\alpha+1)}\left(T^{\alpha}+\frac{T^{\gamma+\alpha-1}}{|\Delta|}+\frac{T^{\gamma-1}}{|\Delta|}\sum_{i=1}^{m}\frac{|\sigma_{i}|\xi_{i}^{\alpha}\Gamma(\alpha/\eta_{i}+\mu_{i}+1)}{\Gamma(\delta_{i}+\alpha/\eta_{i}+\mu_{i}+1)}\right)\\
&=&(Lr+M)\Omega\leq r,
\end{eqnarray*}
which implies that $\mathcal{F}\mathcal{B}_{r}\subset\mathcal{B}_{r}$.\\

Next, for each $t\in [0,T]$ and $x,y\in C([0,T],\mathbb{R})$, , we have
\begin{eqnarray*}
|(\mathcal{F}x)(t)-(\mathcal{F}y)(t)|&\leq&\mathcal{I}^{\alpha}\left(|f(s,x(s))-f(s,y(s))|\right)(T)+\frac{T^{\gamma-1}}{|\Delta|}\mathcal{I}^{\alpha}\left(|f(s,x(s))-f(s,y(s))|\right)(T)\\
&+&\frac{T^{\gamma-1}}{|\Delta|}\sum_{i=1}^{m}|\sigma_{i}|\mathcal{I}_{\eta_{i}}^{\mu_{i},\delta_{i}}\mathcal{I}^{\alpha}\left(|f(s,x(s))-f(s,y(s))|\right)(\xi_{i})\\
&\leq&\frac{L}{\Gamma(\alpha+1)}\left(T^{\alpha}+\frac{T^{\gamma+\alpha-1}}{|\Delta|}+\frac{T^{\gamma-1}}{|\Delta|}\sum_{i=1}^{m}\frac{|\sigma_{i}|\xi_{i}^{\alpha}\Gamma(\alpha/\eta_{i}+\mu_{i}+1)}{\Gamma(\delta_{i}+\alpha/\eta_{i}+\mu_{i}+1)}\right)\|x-y\|\\
&=&L\Omega\|x-y\|,
\end{eqnarray*}
which implies that $\|\mathcal{F}x-\mathcal{F}y|\leq L\Omega\|x-y\|$. As $L\Omega<1$, $\mathcal{F}$ is contraction.\\
Therefore, we deduce by the Banach's contraction mapping principle, that $\mathcal{F}$ has a fixed point which is the unique solution of the boundary value problem (\ref{main}). The proof is completed.
\end{proof}\qedhere

The following existence theorem is based on the Krasnoskelskii's fixed point theorem (Theorem \ref{Krasno}).

\begin{theorem}\label{existence}
Assume that assumptions $(H1)-(H3)$ hold. Then the boundary value problem (\ref{main}) has at least one solution on $[0,T]$, provided that $L\Lambda<1,$ where
\begin{equation}
\Lambda=\frac{1}{\Gamma(\alpha+1)}\left(\frac{T^{\gamma+\alpha-1}}{|\Delta|}+\frac{T^{\gamma-1}}{|\Delta|}\sum_{i=1}^{m}\frac{|\sigma_{i}|\xi_{i}^{\alpha}\Gamma(\alpha/\eta_{i}+\mu_{i}+1)}{\Gamma(\delta_{i}+\alpha/\eta_{i}+\mu_{i}+1)}\right)
\end{equation}
\end{theorem}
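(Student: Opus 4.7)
The plan is to invoke Krasnoselskii's theorem (Theorem \ref{Krasno}) after splitting the integral operator $\mathcal{F}$ into two pieces. First, motivated by the appearance of $\Lambda$, I would write $\mathcal{F} = \mathcal{A} + \mathcal{B}$ where
$$(\mathcal{A}x)(t) = \frac{t^{\gamma-1}}{\Delta}\left(\sum_{i=1}^{m}\sigma_{i}\mathcal{I}_{\eta_{i}}^{\mu_{i},\delta_{i}}\mathcal{I}^{\alpha}f(s,x(s))(\xi_{i}) - \mathcal{I}^{\alpha}f(s,x(s))(T)\right),$$
$$(\mathcal{B}x)(t) = \mathcal{I}^{\alpha}f(s,x(s))(t).$$
The boundary-correction term $\mathcal{A}$ will carry the contraction constant $L\Lambda$, while $\mathcal{B}$ is the piece where the Arzel\`a--Ascoli argument is natural. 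Then I would take the closed convex set $K = B_\rho = \{x \in C([0,T],\mathbb{R}) : \|x\|_C \le \rho\}$ with $\rho \ge \|\psi\|\,\Omega$, where $\Omega$ is from Theorem \ref{uniqueness}.

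For hypothesis (i) of Theorem \ref{Krasno}, I would fix $x,y \in B_\rho$ and, using $(H3)$, estimate
$$|(\mathcal{A}x)(t)+(\mathcal{B}y)(t)| \le \mathcal{I}^{\alpha}\psi(T) + \frac{T^{\gamma-1}}{|\Delta|}\!\left(\mathcal{I}^{\alpha}\psi(T) + \sum_{i=1}^{m}|\sigma_{i}|\mathcal{I}_{\eta_{i}}^{\mu_{i},\delta_{i}}\mathcal{I}^{\alpha}\psi(\xi_{i})\right).$$
Following the same manipulation as in the proof of Theorem \ref{uniqueness} (using Lemma \ref{integral} to evaluate the double integral arising from $\mathcal{I}_{\eta_i}^{\mu_i,\delta_i}\mathcal{I}^\alpha$), this bound collapses to $\|\psi\|\,\Omega \le \rho$, so $\mathcal{A}x + \mathcal{B}y \in B_\rho$.

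For (ii), I would reuse verbatim the Lipschitz computation already performed for $\mathcal{F}$ in Theorem \ref{uniqueness}, but restricted to the $\mathcal{A}$-piece, which removes the bare $T^\alpha/\Gamma(\alpha+1)$ contribution and leaves exactly the constant $\Lambda$. Thus $\|\mathcal{A}x - \mathcal{A}y\|_C \le L\Lambda\|x-y\|_C$, and the assumption $L\Lambda < 1$ makes $\mathcal{A}$ a contraction.

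The substantive step is (iii). Continuity of $\mathcal{B}$ follows from $(H1)$ by dominated convergence (the kernels $(t-s)^{\alpha-1}$ are integrable and the $f(s,x_n(s))$ converge uniformly on $[0,T]$). Uniform boundedness of $\mathcal{B}(B_\rho)$ is immediate from $(H3)$: $\|\mathcal{B}x\|_C \le \|\psi\| T^\alpha/\Gamma(\alpha+1)$. The main obstacle — and the only non-routine part — is equicontinuity: for $0 \le t_1 < t_2 \le T$ and $x \in B_\rho$, I would bound
$$|(\mathcal{B}x)(t_2) - (\mathcal{B}x)(t_1)| \le \frac{\|\psi\|}{\Gamma(\alpha)}\left(\int_{0}^{t_1}\!\bigl[(t_1-s)^{\alpha-1}-(t_2-s)^{\alpha-1}\bigr]ds + \int_{t_1}^{t_2}\!(t_2-s)^{\alpha-1}ds\right),$$
which evaluates to $\frac{\|\psi\|}{\Gamma(\alpha+1)}\bigl(t_1^\alpha - t_2^\alpha + 2(t_2-t_1)^\alpha\bigr)$ and tends to $0$ as $t_2 - t_1 \to 0$ independently of $x$. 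Arzel\`a--Ascoli then gives compactness of $\mathcal{B}$ on $B_\rho$, and all three hypotheses of Theorem \ref{Krasno} are satisfied, producing a fixed point of $\mathcal{F}$ which is the desired solution of \eqref{main}.
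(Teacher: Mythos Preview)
Your proposal is correct and follows essentially the same route as the paper: the identical splitting $\mathcal{F}=\mathcal{A}+\mathcal{B}$, the same ball of radius $\|\psi\|\,\Omega$, the contraction estimate yielding $L\Lambda$ for $\mathcal{A}$, and Arzel\`a--Ascoli for $\mathcal{B}$. Two minor remarks: you correctly verify $\mathcal{A}x+\mathcal{B}y\in B_\rho$ for \emph{distinct} $x,y$ (which is what Theorem~\ref{Krasno} actually requires, and which the paper glosses over by taking $x=y$), while in your equicontinuity step the integrand $(t_1-s)^{\alpha-1}-(t_2-s)^{\alpha-1}$ is negative for $\alpha>1$, so you should insert absolute values there---the paper handles this by bounding directly by $\frac{\hat f}{\Gamma(\alpha+1)}|t_2^\alpha-t_1^\alpha|$.
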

\begin{proof}
Consider $\mathcal{B}_{r^{*}}=\{x\in C([0,T],\mathbb{R}):\|x\|_{C}\leq r^{*}\}$ with $r^{*}\geq\|\psi\|\Omega$. We define two operators $\mathcal{A},\mathcal{B}$ on $\mathcal{B}_{r^{*}}$ by
$$(\mathcal{A}x)(t)=\frac{t^{\gamma-1}}{\Delta}\left(\sum_{i=1}^{m}\sigma_{i}\mathcal{I}_{\eta_{i}}^{\mu_{i},\delta_{i}}\mathcal{I}^{\alpha}f(s,x(s))(\xi_{i})-\mathcal{I}^{\alpha}f(s,x(s))(T)\right),$$
and
$$(\mathcal{B}x)(t)=\mathcal{I}^{\alpha}f(s,x(s))(t).$$
For each $t\in [0,T]$ and any $x,y\in\mathcal{B}_{r^{*}}$, we have
\begin{eqnarray*}
|(\mathcal{A}x)(t)+(\mathcal{B}x)(t)|&\leq&\sup_{t\in [0,T]}\left\{\mathcal{I}^{\alpha}|f(s,x(s))|(t)+\frac{t^{\gamma-1}}{|\Delta|}\mathcal{I}^{\alpha}|f(s,x(s))|(T)\right.\\
&+&\left.\frac{t^{\gamma-1}}{|\Delta|}\sum_{i=1}^{m}|\sigma_{i}|\mathcal{I}_{\eta_{i}}^{\mu_{i},\delta_{i}}\mathcal{I}^{\alpha}|f(s,x(s))|(\xi_{i})\right\}\\
&\leq&\frac{\|\psi\|}{\Gamma(\alpha+1)}\left(T^{\alpha}+\frac{T^{\gamma+\alpha-1}}{|\Delta|}+\frac{T^{\gamma-1}}{|\Delta|}\sum_{i=1}^{m}\frac{|\sigma_{i}|\xi_{i}^{\alpha}\Gamma(\alpha/\eta_{i}+\mu_{i}+1)}{\Gamma(\delta_{i}+\alpha/\eta_{i}+\mu_{i}+1)}\right)\\
&=&\|\psi\|\Omega\leq r^{*}.
\end{eqnarray*}
Therefore, $\mathcal{A}x+\mathcal{B}x\in\mathcal{B}_{r^{*}}$.\\

Next, it is easy to show that $\mathcal{A}x$ is contraction. Indeed, 
\begin{eqnarray*}
|(\mathcal{A}x)(t)-(\mathcal{A}y)(t)|&\leq&\frac{T^{\gamma-1}}{|\Delta|}\mathcal{I}^{\alpha}\left(|f(s,x(s))-f(s,y(s))|\right)(T)\\
&+&\frac{T^{\gamma-1}}{|\Delta|}\sum_{i=1}^{m}|\sigma_{i}|\mathcal{I}_{\eta_{i}}^{\mu_{i},\delta_{i}}\mathcal{I}^{\alpha}\left(|f(s,x(s))-f(s,y(s))|\right)(\xi_{i})\\
&\leq&\frac{1}{\Gamma(\alpha+1)}\left(\frac{T^{\gamma+\alpha-1}}{|\Delta|}+\frac{T^{\gamma-1}}{|\Delta|}\sum_{i=1}^{m}\frac{|\sigma_{i}|\xi_{i}^{\alpha}\Gamma(\alpha/\eta_{i}+\mu_{i}+1)}{\Gamma(\delta_{i}+\alpha/\eta_{i}+\mu_{i}+1)}\right)\|x-y\|\\
&=&L\Lambda\|x-y\|.
\end{eqnarray*}
Since $L\Lambda<1$, then $\mathcal{A}$ is contraction.\\
It remains to prove the continuity and compactness of $\mathcal{B}$. In view of assumption $(H1)$, the continuity of the function $f$ implies that the operator  $\mathcal{B}$ is continuous. Also, we observe that
\begin{eqnarray*}
|(\mathcal{B}x)(t)|&\leq&\sup_{t\in [0,T]}\left\{\mathcal{I}^{\alpha}|f(s,x(s))|(t)\right\}\\
&\leq&\frac{T^{\alpha}}{\Gamma(\alpha+1)}\|\psi\|.
\end{eqnarray*} 
This shows that $\mathcal{B}$ is uniformly bounded on $\mathcal{B}_{r^{*}}$.\\

Now, we prove the compactness of $\mathcal{B}$. We define $$\sup_{(t,x)\in [0,T]\times\mathcal{B}_{r^{*}}}|f(t,x)|=\hat{f}<\infty.$$
For each $t_1,t_2\in [0,T],t_1\leq t_2$ and $x\in\mathcal{B}_{r^{*}}$, we get
\begin{eqnarray*}
|(\mathcal{B}x)(t_2)-(\mathcal{B}x)(t_1)|&\leq&\frac{1}{\Gamma(\alpha)}\left|\int_{0}^{t_2}(t_2-s)^{\alpha-1}f(s,x(s))ds-\int_{0}^{t_1}(t_1-s)^{\alpha-1}f(s,x(s))ds\right|\\
&\leq&\frac{1}{\Gamma(\alpha+1)}\left(\int_{0}^{t_2}[(t_2-s)^{\alpha-1}-(t_1-s)^{\alpha-1}]|f(s,x(s))|ds\right.\\
&+&\left.\int_{t_1}^{t_2}(t_1-s)^{\alpha-1}|f(s,x(s))|ds\right)\\
&\leq&\frac{\hat{f}}{\Gamma(\alpha+1)}|t_{2}^{\alpha}-t_{1}^{\alpha}|.
\end{eqnarray*} 
The right hand side of the above inequality tends to zero as $t_2-t_1\to 0$, which implies that $\mathcal{B}$ is equicontinuous. Hence $\mathcal{B}$ is relatively compact on $\mathcal{B}_{r^{*}}$. By the Arzel\'{a}-Ascoli theorem, we deduce that the operator $\mathcal{B}$ is compact. We conclude, by the Krasnoskelskii's fixed point theorem, that the boundary value problem (\ref{main}) has at least one solution on $[0,T]$. The proof is completed.
\end{proof}\qedhere

\section{Stability Results}
In this section, we discuss the Ulam-Hyers and generalized Ulam-Hyers stability results for the problem (\ref{main}). 

\begin{remark}\label{remarkg}
A function $y\in C([0,T],\mathbb{R})$ is a solution of the inequality
$$
\left|^{H}\mathcal{D}^{\alpha,\beta}y(t)-f\left(t,y(t)\right)\right|\leq\epsilon,~~t\in [0,T],
$$
if and only if there exist a function $g\in C([0,T],\mathbb{R})$ (which depend on y) such that
\begin{itemize}
  \item [(i)] $|g(t)|\leq\epsilon,~t\in [0,T]$,
  \item [(ii)] $^{H}\mathcal{D}^{\alpha,\beta}y(t)=f\left(t,y(t)\right)+g(t),~~t\in [0,T]$,
\end{itemize}
\end{remark}

\begin{lemma}
If $y\in C([0,T],\mathbb{R})$ is a solution of the inequality 
$$
\left|^{H}\mathcal{D}^{\alpha,\beta}y(t)-f\left(t,y(t)\right)\right|\leq\epsilon,~~t\in [0,T],
$$
then $y$ satisfies
\begin{equation}
|y(t)-(\mathcal{F}y)(t)|\leq\Omega\epsilon,
\end{equation}
where $\Omega$ is defined in (\ref{condcontra}).
\end{lemma}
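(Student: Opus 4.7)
The plan is to reduce the stability inequality to an integral identity plus a controlled remainder, and then bound the remainder using the same computations that produced $\Omega$ in Theorem \ref{uniqueness}.

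First, I would invoke Remark \ref{remarkg} to rewrite the differential inequality as an equation: there exists $g\in C([0,T],\mathbb{R})$ with $\|g\|_{\infty}\le\epsilon$ such that $^{H}\mathcal{D}^{\alpha,\beta}y(t)=f(t,y(t))+g(t)$ on $[0,T]$, together with the boundary condition $y(0)=0$ and $y(T)=\sum_i\sigma_i\mathcal{I}_{\eta_i}^{\mu_i,\delta_i}y(\xi_i)$ (since $y$ is assumed to be a solution of the inequality under the boundary data of problem \eqref{main}). Applying Lemma \ref{ess} with $h(t)=f(t,y(t))+g(t)$ gives the integral representation
\begin{equation*}
y(t)=\mathcal{I}^{\alpha}\bigl[f(s,y(s))+g(s)\bigr](t)+\frac{t^{\gamma-1}}{\Delta}\Bigl(\sum_{i=1}^{m}\sigma_{i}\mathcal{I}_{\eta_{i}}^{\mu_{i},\delta_{i}}\mathcal{I}^{\alpha}\bigl[f(\cdot,y)+g\bigr](\xi_{i})-\mathcal{I}^{\alpha}\bigl[f(\cdot,y)+g\bigr](T)\Bigr).
\end{equation*}

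Next, I would subtract $(\mathcal{F}y)(t)$, which is the same expression with $g$ omitted. By linearity of the Riemann--Liouville and Erd\'elyi--Kober integrals, the difference is
\begin{equation*}
y(t)-(\mathcal{F}y)(t)=\mathcal{I}^{\alpha}g(t)+\frac{t^{\gamma-1}}{\Delta}\Bigl(\sum_{i=1}^{m}\sigma_{i}\mathcal{I}_{\eta_{i}}^{\mu_{i},\delta_{i}}\mathcal{I}^{\alpha}g(\xi_{i})-\mathcal{I}^{\alpha}g(T)\Bigr),
\end{equation*}
so everything that involves $f$ cancels and the estimate depends only on $g$.

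Finally, I would take absolute values, use $|g(s)|\le\epsilon$, and bound each term by replicating the integral computations from the proof of Theorem \ref{uniqueness}: namely $\mathcal{I}^{\alpha}|g|(T)\le\epsilon T^{\alpha}/\Gamma(\alpha+1)$, and for the composed operator the change of order of integration together with Lemma \ref{integral} yields
\begin{equation*}
\mathcal{I}_{\eta_{i}}^{\mu_{i},\delta_{i}}\mathcal{I}^{\alpha}|g|(\xi_{i})\le\frac{\epsilon\,\xi_{i}^{\alpha}\,\Gamma(\alpha/\eta_{i}+\mu_{i}+1)}{\Gamma(\alpha+1)\,\Gamma(\delta_{i}+\alpha/\eta_{i}+\mu_{i}+1)}.
\end{equation*}
Using $t^{\gamma-1}\le T^{\gamma-1}$ and summing the three contributions reproduces exactly the bracket that defines $\Omega$ in \eqref{condcontra}, giving $|y(t)-(\mathcal{F}y)(t)|\le\Omega\epsilon$.

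There is no real obstacle here; the argument is essentially bookkeeping. The only subtle point is to justify that the representation from Lemma \ref{ess} applies to $y$, which requires recognizing that the ``solution of the inequality'' is implicitly assumed to satisfy the same boundary conditions as \eqref{main}, so that the reformulation via $g$ in Remark \ref{remarkg} places $y$ in the hypotheses of Lemma \ref{ess} with $h=f(\cdot,y)+g$.
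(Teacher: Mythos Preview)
Your proposal is correct and follows essentially the same route as the paper: invoke Remark~\ref{remarkg} to obtain the perturbation $g$, apply Lemma~\ref{ess} with $h=f(\cdot,y)+g$, subtract $(\mathcal{F}y)(t)$ so that only the $g$-terms survive, and bound by $\Omega\epsilon$ via the same integral estimates used in Theorem~\ref{uniqueness}. Your write-up is in fact more explicit than the paper's about the intermediate bounds and about the implicit assumption that $y$ satisfies the boundary data of~\eqref{main}.
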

\begin{proof}
From Remark \ref{remarkg} and Lemma \ref{ess}, we have
$$y(t)=\mathcal{I}^{\alpha}(f(s,y(s))+g)(t)+\frac{t^{\gamma-1}}{\Delta}\left(\sum_{i=1}^{m}\sigma_{i}\mathcal{I}_{\eta_{i}}^{\mu_{i},\delta_{i}}\mathcal{I}^{\alpha}(f(s,y(s))+g)(\xi_{i})-\mathcal{I}^{\alpha}(f(s,y(s))+g)(T)\right).$$
Then, we get
\begin{eqnarray*}
|y(t)-(\mathcal{F}y)(t)|&=&\left|\mathcal{I}^{\alpha}(f(s,y(s))+g)(t)+\frac{t^{\gamma-1}}{\Delta}\left(\sum_{i=1}^{m}\sigma_{i}\mathcal{I}_{\eta_{i}}^{\mu_{i},\delta_{i}}\mathcal{I}^{\alpha}(f(s,y(s))+g)(\xi_{i})\right.\right.\\
&-&\mathcal{I}^{\alpha}(f(s,y(s))+g)(T)\Bigg)\\
&-&\left.\mathcal{I}^{\alpha}f(s,y(s))(t)-\frac{t^{\gamma-1}}{\Delta}\left(\sum_{i=1}^{m}\sigma_{i}\mathcal{I}_{\eta_{i}}^{\mu_{i},\delta_{i}}\mathcal{I}^{\alpha}f(s,y(s))(\xi_{i})-\mathcal{I}^{\alpha}f(s,y(s))(T)\right)\right|\\
&\leq&\mathcal{I}^{\alpha}|g|(t)+\frac{t^{\gamma-1}}{|\Delta|}\left(\sum_{i=1}^{m}|\sigma_{i}|\mathcal{I}_{\eta_{i}}^{\mu_{i},\delta_{i}}\mathcal{I}^{\alpha}|g|(\xi_{i})-\mathcal{I}^{\alpha}|g|(T)\right)\\
&\leq&\Omega\epsilon.
\end{eqnarray*}
This completes the proof.
\end{proof}\qedhere

\begin{theorem}
Assume that assumptions $(H1)$ and $(H2)$ are satisfied. Then the problem (\ref{main}) is Ulam-Hyers stable.
\end{theorem}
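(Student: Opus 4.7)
The plan is to combine the preceding lemma (which bounds $\lvert y(t) - (\mathcal{F}y)(t)\rvert$ by $\Omega\epsilon$) with the contraction estimate $\lVert \mathcal{F}u - \mathcal{F}v\rVert \leq L\Omega \lVert u-v\rVert$ established inside the proof of Theorem \ref{uniqueness}. Let $x\in C([0,T],\mathbb{R})$ denote the unique solution of (\ref{main}) guaranteed by Theorem \ref{uniqueness}; thus $x=\mathcal{F}x$. Let $y\in C([0,T],\mathbb{R})$ be any solution of the perturbed inequality in Definition \ref{def1}.

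The key step is a triangle inequality split: for each $t\in[0,T]$,
\begin{equation*}
|y(t)-x(t)| = |y(t)-(\mathcal{F}x)(t)| \leq |y(t)-(\mathcal{F}y)(t)| + |(\mathcal{F}y)(t)-(\mathcal{F}x)(t)|.
\end{equation*}
The first term is controlled by the previous lemma by $\Omega\epsilon$, and the second by the Lipschitz estimate already derived, giving $L\Omega\,\lVert y-x\rVert_{C}$. Taking the supremum over $t\in[0,T]$ on the left-hand side yields
\begin{equation*}
\lVert y-x\rVert_{C} \leq \Omega\epsilon + L\Omega\,\lVert y-x\rVert_{C}.
\end{equation*}

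Since $L\Omega<1$ by hypothesis (as assumed in Theorem \ref{uniqueness}, whose conclusion is being invoked to produce $x$), I can rearrange to obtain
\begin{equation*}
\lVert y-x\rVert_{C} \leq \frac{\Omega}{1-L\Omega}\,\epsilon.
\end{equation*}
Setting $C_{f} := \Omega/(1-L\Omega) > 0$, which depends only on $f$ and the data, not on $\epsilon$ or $y$, verifies Definition \ref{def1} and establishes Ulam-Hyers stability. The generalized Ulam-Hyers stability then follows immediately by taking $\vartheta_{f}(\epsilon) := C_{f}\epsilon$, consistent with the remark after Definition \ref{def2}.

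There is no real obstacle here: the entire argument is an application of the triangle inequality plus the contraction constant, and everything needed is already packaged in the preceding lemma and in the contraction step of Theorem \ref{uniqueness}. The only subtlety worth flagging in writing is to make sure the condition $L\Omega<1$ is explicitly recalled so that division by $1-L\Omega$ is justified; this is an implicit standing hypothesis since the very existence of the solution $x$ to which one compares requires Theorem \ref{uniqueness}.
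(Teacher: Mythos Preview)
Your proposal is correct and matches the paper's proof essentially line for line: the paper also writes $|y(t)-x(t)|=|y(t)-(\mathcal{F}x)(t)|$, splits via the triangle inequality into $|y(t)-(\mathcal{F}y)(t)|+|(\mathcal{F}y)(t)-(\mathcal{F}x)(t)|$, bounds the two pieces by $\Omega\epsilon$ and $L\Omega\,|y-x|$ respectively, and solves for $|y-x|\leq \Omega\epsilon/(1-L\Omega)=C_f\epsilon$, then notes generalized Ulam--Hyers stability with $\vartheta_f(\epsilon)=C_f\epsilon$. Your explicit mention that $L\Omega<1$ is needed to justify the rearrangement is a welcome clarification that the paper leaves implicit.
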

\begin{proof}Let $\epsilon>0$, $y\in C([0,T],\mathbb{R})$ be a solution of the inequality 
$$
\left|^{H}\mathcal{D}^{\alpha,\beta}y(t)-f\left(t,y(t)\right)\right|\leq\epsilon,~~t\in [0,T],
$$
and let $x\in C([0,T],\mathbb{R})$ be the unique solution of problem (\ref{main}). Then, we have
\begin{eqnarray*}
|y(t)-x(t)|&=&\left|y(t)-\mathcal{I}^{\alpha}f(s,x(s))(t)-\frac{t^{\gamma-1}}{\Delta}\left(\sum_{i=1}^{m}\sigma_{i}\mathcal{I}_{\eta_{i}}^{\mu_{i},\delta_{i}}\mathcal{I}^{\alpha}f(s,x(s))(\xi_{i})-\mathcal{I}^{\alpha}f(s,x(s))(T)\right)\right|\\
&=&|y(t)-(\mathcal{F}x)(t)|\\
&=&|y(t)-(\mathcal{F}y)(t)+(\mathcal{F}y)(t)-(\mathcal{F}x)(t)|\\
&\leq&|y(t)-(\mathcal{F}y)(t)|+|(\mathcal{F}y)(t)-(\mathcal{F}x)(t)|\\
&\leq&\Omega\epsilon+L\Omega|y-x|,
\end{eqnarray*}
which implies that $$|y(t)-x(t)|\leq\frac{\Omega\epsilon}{1-L\Omega},~~L\Omega<1.$$
By setting $C_{f}=\frac{\Omega}{1-L\Omega}$, we get $$|y(t)-x(t)|\leq C_{f}\epsilon.$$
Thus, the problem (\ref{main}) is Ulam-Hyers stable.\\
 If we set $\vartheta_{f}(\epsilon)=C_{f}\epsilon,~\vartheta_{f}(0)=0$, then the problem (\ref{main}) is generalized Ulam-Hyers stable.
\end{proof}\qedhere

\section{An example}
In this section we consider the following Hilfer fractional differential equation with Erd\'{e}lyi-Kober fractional integral boundary condition:
\begin{equation}\label{example}
\begin{cases}
^{H}\mathcal{D}^{\frac{4}{3},\frac{5}{6}}x(t)=\frac{|x(t)|}{25\sqrt{4+t^{2}}(1+|x(t)|)},~~t\in [0,1],\\
\\
x(0)=0,~~x(1)=\frac{1}{3}\mathcal{I}_{\frac{1}{5}}^{\frac{1}{4},\frac{3}{7}}x(\frac{5}{4})+\frac{2}{5}\mathcal{I}_{\frac{2}{9}}^{\frac{2}{3},\frac{5}{8}}x(\frac{3}{2})+\frac{5}{6}\mathcal{I}_{\frac{1}{3}}^{\frac{1}{6},\frac{1}{5}}x(\frac{2}{7}),
\end{cases}
\end{equation}
where $\alpha=\frac{4}{3},\beta=\frac{5}{6},\gamma=\frac{17}{9},T=1,m=3,\sigma_{1}=\frac{1}{3},\sigma_{2}=\frac{2}{5},\sigma_{3}=\frac{5}{6},\mu_{1}=\frac{1}{4},\mu_{2}=\frac{2}{3},\mu_{3}=\frac{1}{6},\delta_{1}=\frac{3}{7},\delta_{2}=\frac{5}{8},\delta_{3}=\frac{1}{5},\eta_{1}=\frac{1}{5},\eta_{2}=\frac{2}{9},\eta_{3}=\frac{1}{3},\xi_{1}=\frac{5}{4},\xi_{2}=\frac{3}{2},\xi_{3}=\frac{2}{7}$ and the function $f(t,x(t))=\frac{|x(t)|}{25\sqrt{4+t^{2}}(1+|x(t)|)}$.\\

We can see that
\begin{eqnarray*}
|f(t,x(t)-f(t,y(t)|&=&\left|\frac{|x(t)|}{25\sqrt{4+t^{2}}(1+|x(t)|)}-\frac{|y(t)|}{25\sqrt{4+t^{2}}(1+|y(t)|)}\right|\\
&\leq&\frac{1}{25\sqrt{4+t^{2}}}\frac{|x(t)|-|y(t)|}{(1+|x(t)|)(1+|y(t)|)}\\
&\leq&\frac{1}{50}|x-y|,
\end{eqnarray*}
which implies, by assumption $(H2)$, that $L=\frac{1}{50}$.\\
Simple calculations give
$$\Delta=T^{\gamma-1}-\sum_{i=1}^{m}\sigma_{i}\xi_{i}^{\gamma-1}\frac{\Gamma(\mu_{i}+(\gamma-1)/\eta_{i}+1)}{\Gamma(\mu_{i}+(\gamma-1)/\eta_{i}+\delta_{i}+1)}\approx -0.029801394\not=0,$$
$$\Omega=\frac{1}{\Gamma(\alpha+1)}\left(T^{\alpha}+\frac{T^{\gamma+\alpha-1}}{|\Delta|}+\frac{T^{\gamma-1}}{|\Delta|}\sum_{i=1}^{m}\frac{|\sigma_{i}|\xi_{i}^{\alpha}\Gamma(\alpha/\eta_{i}+\mu_{i}+1)}{\Gamma(\delta_{i}+\alpha/\eta_{i}+\mu_{i}+1)}\right)\approx 43.74995072,$$
and
$$\Lambda=\frac{1}{\Gamma(\alpha+1)}\left(\frac{T^{\gamma+\alpha-1}}{|\Delta|}+\frac{T^{\gamma-1}}{|\Delta|}\sum_{i=1}^{m}\frac{|\sigma_{i}|\xi_{i}^{\alpha}\Gamma(\alpha/\eta_{i}+\mu_{i}+1)}{\Gamma(\delta_{i}+\alpha/\eta_{i}+\mu_{i}+1)}\right)\approx 42.91006582.$$
Hence, we get $L\Omega\approx 0.8749990144<1$ and $L\Lambda\approx 0.8582013164<1$.\\
Therefore, the conclusion of Theorem \ref{existence} implies that the boundary value problem (\ref{example}) has at least one solution on $[0,1]$ and by Theorem \ref{uniqueness}, this solution is unique.

\end{document}